\newtheorem{thm}{Theorem}[section]
\newtheorem{que}[thm]{Question}
\newtheorem{lem}[thm]{Lemma}
\newtheorem{prop}[thm]{Proposition}
\theoremstyle{definition}
\numberwithin{equation}{section}
\newcommand{\orb}{\mathrm{Orb}}
\newcommand{\Rec}{\mathrm{Rec}}
\newcommand{\Per}{\mathrm{Per}}
\DeclareMathOperator{\Int}{int}
\DeclareMathOperator{\Bd}{Bd}
\begin{document}
\title{Dendrites and measures with discrete spectrum}
\author[M. Fory\'s]{Magdalena Fory\'s-Krawiec}
\address[M. Fory\'s]{AGH University of Science and Technology, Faculty of Applied
	Mathematics, al. Mickiewicza 30, 30-059 Krak\'ow, Poland}
\email{maforys@agh.edu.pl}
\author[J. Hant\'{a}kov\'{a}]{Jana Hant\'{a}kov\'{a}}
\address[J. Hant\'{a}kov\'{a}]{AGH University of Science and Technology, Faculty of Applied
	Mathematics, al.
	Mickiewicza 30, 30-059 Krak\'ow, Poland
		-- and --
		Mathematical Institute of the Silesian University in Opava, Na Rybn\'i\v{c}ku 1, 74601, Opava, Czech Republic}
\email{jana.hantakova@math.slu.cz}
\author[J. Kupka]{Ji\v{r}\'{\i} Kupka}
\address[J. Kupka]{Centre of Excellence IT4Innovations - Institute for Research and Applications of Fuzzy Modeling, University of Ostrava, 30. dubna 22, 701 03 Ostrava 1, Czech Republic. }
\email{jiri.kupka@osu.cz}
\author[P. Oprocha]{Piotr Oprocha}
\address[P. Oprocha]{AGH University of Science and Technology, Faculty of Applied
	Mathematics, al.
	Mickiewicza 30, 30-059 Krak\'ow, Poland
	-- and --
	Centre of Excellence IT4Innovations - Institute for Research and Applications of Fuzzy Modeling, University of Ostrava, 30. dubna 22, 701 03 Ostrava 1, Czech Republic.}
\email{oprocha@agh.edu.pl}
\author[S. Roth]{Samuel Roth}
\address[S. Roth]{Mathematical Institute, Silesian University in Opava, Na Rybni\v{c}ku 1, 74601, Opava, Czech Republic}
\email{samuel.roth@math.slu.cz}

\begin{abstract}
We are interested in dendrites for which all invariant measures of zero-entropy mappings have discrete spectrum, and we prove that this holds when the closure of the endpoint set of the dendrite is countable. This solves an open question which was around for awhile, almost completing the characterization of dendrites with this property.
\end{abstract}

\keywords{dendrite, discrete spectrum, topological entropy, minimal set}
\subjclass[2020]{37B40, 37B45, 54F50}

\maketitle

\section{Introduction}

A \emph{dynamical system} is a pair $(X,f)$ where $X$ is a compact metric space and $f\colon X\to X$ is a continuous map. A \emph{continuum} is a compact connected metric space. Throughout this paper we assume $X$ is a \emph{dendrite}, that is, a locally connected continuum containing no simple closed curve. 

The main motivation of the paper can be derived from the M\"obius Disjointness Conjecture proposed by Sarnak in 2009 \cite{S09}.
By topological arguments the conjecture was confirmed on various one dimensional spaces: the interval \cite{K15}, the circle \cite{Dav}, topological graphs \cite{LOYZ17}, some dendrites \cite{Marzougi}, etc. On the other hand, using ergodic theory it was proved that if all invariant measures have discrete spectrum then the conjecture also holds (e.g. see \cite[Theorem 1.2]{HWY17}). This leads to a natural question, what can be said about the spectrum of measures for zero entropy maps in the above mentioned spaces. In \cite{Opr_Quasi} the authors confirmed that, indeed, topological graphs maps with zero entropy can have only invariant measures with discrete spectrum.
This motivated the following open question \cite[Question 1.1]{Opr_Quasi}: 

\begin{que}\label{que:main}
Which one-dimensional continua $X$ have the property that every invariant measure of $(X,f)$ has discrete spectrum, assuming $f$ is a zero-entropy map?
\end{que}

Similar questions, however, were stated even before, for example in \cite{Scarp} from 1982, where the author asked whether every ergodic invariant measure in a mean equicontinuous system has discrete spectrum. The authors of \cite{Opr_Quasi} have partially answered this question showing the result holds for zero entropy maps on quasi-graphs $X$, and it was completely answered in the affirmative in 2015 in \cite{LTY}.
Let us mention at this point that continua satisfying the condition in Question~\ref{que:main} cannot be too complex. It was shown in \cite{Opr_Quasi} that if a dendrite has an uncountable set of endpoints, then it supports a plethora of maps with zero topological entropy possessing invariant measures which do not have discrete spectrum. Then in the realm of dendrites, only those with a countable set of endpoints can be examples in Question~\ref{que:main}.

In this paper we study the dynamics of zero-entropy dendrite maps on dendrites for which the endpoint set has a countable closure. In Section~\ref{main} we build on results from~\cite{Askri,Askri2} and show that every recurrent point is in fact minimal (Theorem~\ref{thm:recmin}), generalizing a well-known property of zero-entropy interval maps. In Section~\ref{sec:spectrum} we use this result together with a characterization of minimal $\omega$-limit sets from~\cite{Askri2} to show that every invariant measure has discrete spectrum (Theorem~\ref{th:RecMinDiscrete}) in the case of these dendrites. Our results almost completely characterize dendrites for which all invariant measures of zero-entropy mappings have discrete spectrum. We leave unsolved the case of dendrites for which the endpoint set is countable but has an uncountable closure. We strongly believe that in the case of these dendrites the analog of  Theorem~\ref{thm:recmin} also holds, since all known examples seem to confirm that. Unfortunately, we were not able to find a good argument to justify
this statement.
Structural properties of (other) one dimensional continua that may serve as positive examples in Question~\ref{que:main} are yet to be understood. 

\section{Preliminaries}

Let $(X,f)$ be a dynamical system and $x\in X$.
The \textit{orbit of $x$}, denoted by $\orb_f(x)$, is the set $\{f^n(x)\colon n\geq 0\}$, and the \textit{$\omega$-limit set of $x$}, denoted by $\omega_f(x)$, is defined as the intersection $\bigcap_{n\ge 0} \overline{\{f^m (x)\colon  m\ge n\}}$. It is easy to check that $\omega_f(x)$ is closed and strongly $f$-invariant, i.e., $f (\omega_f(x))= \omega_f(x)$. The point $x$ is \textit{periodic} ($x\in \Per(f)$) if $f^p(x) =x$ for some $p\in \mathbb N$, where the smallest such $p$ is called the \textit{period} of $x$. Note that throughout this paper $\mathbb N$ denotes the set of positive integers. The point $x$ is \textit{recurrent} ($x\in \Rec(f)$) if $x\in \omega_f (x)$.
The orbit of a set $A\subset X$, denoted $\orb_f(A)$, is the set $\bigcup_{n\geq 0} f^n(A)$, and $A$ is called \textit{invariant} if $f(A)\subseteq A$.
A set $M$ is \textit{minimal} if it is nonempty, closed, invariant and does not have a proper subset with these three properties. It can be equivalently characterized
by $M=\omega_f(x)$ for every $x\in M$. A point is \textit{minimal} if it belongs to a minimal set.

Recall that \emph{dendrite} is a locally connected continuum $X$ containing no homeomorphic copy of a circle. A continuous map from a dendrite into itself is called a \emph{dendrite map}. For any point $x \in X$ the \emph{order} of $x$, denoted by $ord(x)$, is the number of connected components of $X\setminus \{x\}$. Points of order $1$ are called \emph{endpoints} while points of order at least $3$ are called \emph{branch points}.  By $E(X)$ and $B(X)$ we denote the set of endpoints and branch points respectively. In this paper we especially focus on dendrites in which $E(X)$ has countable closure. These dendrites are a special case of a tame graph, as introduced in~\cite{BBPRV}. Note that when $\overline{E(X)}$ is countable, so also is $\overline{B(X)\cup E(X)}$, since $B(X)$ is countable in any dendrite and has accumulation points only in $\overline{E(X)}$.

For any two distinct points $x,y \in X$ there exists a unique arc $[x,y]\subset X$ joining those points. A \emph{free arc} is an arc containing no branch points.   We say that two arcs $I, J$ form an arc horseshoe for $f$ if $f^n(I) \cap f^m(J)\supset I \cup J$ for some $n,m\in\mathbb{N}$,
where $I, J$ are disjoint except possibly at one endpoint.
Denote by $h_{top}(f)$ the topological entropy of a dendrite map $f$
(for the definition, see \cite{AKM,B,D}). We will frequently use the fact that for dendrite maps positive topological entropy is implied by the existence of an arc horseshoe \cite{KKM}.

The set of all \textit{Borel probability measures} over $X$ is denoted by $M(X)$, and $M_f(X)\subset M(X)$ denotes the set of all elements of $M(X)$  \textit{invariant} with respect to the map $f$.
The set of all \textit{ergodic} measures in $M_f(X)$ is denoted by $M_f^e(X)$. We say that a finite measure $\mu$ on $X$ is \textit{concentrated} on $A\subset X$ if $\mu(A)=\mu(X)$.
It is well known that $M(X)$ endowed with the weak-* topology is a compact metric space and that
$M_f(X)$ is its closed subset.
We say that $\mu\in M_f(X)$ has \textit{discrete spectrum}, if the linear span of the eigenfunctions of $U_f$ in $L^2_\mu(X)$ is dense in $L^2_\mu(X)$, where as usual $U_f$ denotes the \textit{Koopman operator}: $U_f(\varphi)=\varphi\circ f$ for every $\varphi\in L^2_\mu (X)$.
We refer the reader to  \cite{W82} and \cite{Downar} as standard monographs on ergodic theory and entropy.

\section{Recurrence and Minimality in Dendrites with $\overline{E(X)}$ countable} \label{main}
First we recall the following results by Askri on the structure of minimal
$\omega$-limit sets in a special class of dendrite maps.

\begin{prop}[{\cite[Proposition 3.4]{Askri2}}]\label{prop:MinSolSet}
	Let $X$ be a dendrite such that
	$E(X)$ is countable and
	let $f\colon X\to X$ be a continuous map with zero topological entropy. If
	$M= \omega_f (x)$ is an infinite minimal $\omega$-limit set for some $x\in X$, then for every $k\geq 1$ there is an $f$-periodic subdendrite $D_k$ of $X$ and an integer $n_k\geq 2$  with the following
	properties:
	\begin{enumerate}
		\item $D_k$ has period $\alpha_k:=n_1 n_2 \dots n_k$,
		\item for $i\neq j\in\{0,\ldots,\alpha_k-1\}, f^i(D_k)$ and $ f^j(D_k)$ are either disjoint or intersect in one common point, 
		\item $\bigcup_{k=0}^{n_j-1}f^{k\alpha_{j-1}}(D_j)\subset D_{j-1},$	
		\item $M \subset \bigcap_{k\geq 0}Orb_f(D_k)$,
		\item\label{cond:5} $f(M^k_i) = M^k_{i+1 \mod \alpha_k}$, where $M^k_i = M\cap f^i(D_k)$ for all $k$ and all $0\leq i\leq \alpha_k-1$.
	\end{enumerate}
\end{prop}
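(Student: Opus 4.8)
The plan is to build the pairs $(D_k,n_k)$ by recursion on $k$, at each step \emph{refining} a cycle of subdendrites that already carries $M$. We will repeatedly use the elementary fact that every subdendrite of $X$ has at most countably many endpoints (each complementary component of such a subdendrite absorbs a distinct endpoint of $X$), so the countability hypothesis survives all the reductions below. Put $D_0:=\bigcap_{n\ge0}f^n(X)$ — a subdendrite, being a decreasing intersection of subcontinua, with $f(D_0)=D_0$ and $M\subseteq D_0$ — and $\alpha_0:=1$. Suppose inductively that $D_{k-1}$ of period $\alpha_{k-1}$ has been produced so that (1)--(5) hold through index $k-1$, and let $g:=f^{\alpha_{k-1}}|_{D_{k-1}}$. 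Then $g(D_{k-1})=D_{k-1}$, $g$ has zero entropy, and by (5) the set $N:=M\cap D_{k-1}$ is infinite — its $\alpha_{k-1}$ pieces $M^{k-1}_i$ are cyclically permuted by $f$, hence all infinite, their union being $M$ — and minimal for $g$. The inductive step thus reduces to the following \emph{refinement lemma}: \emph{if $Y$ is a dendrite with $E(Y)$ countable, $g\colon Y\to Y$ a continuous zero-entropy surjection, and $N\subseteq Y$ an infinite minimal set, then there are an integer $n\ge2$ and a subdendrite $D\subseteq Y$ such that $g^n(D)=D$, the dendrites $D,g(D),\dots,g^{n-1}(D)$ are pairwise disjoint or meet in a single point, $N\subseteq\bigcup_{i<n}g^i(D)$, and $g$ cyclically permutes the sets $N\cap g^i(D)$.} Applying this to $(D_{k-1},g,N)$ yields $n_k$ and $D_k\subseteq D_{k-1}$; set $\alpha_k:=\alpha_{k-1}n_k$. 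Then (1) is immediate; (3) is the inclusion $\bigcup_{j<n_k}f^{j\alpha_{k-1}}(D_k)=\bigcup_{j<n_k}g^j(D_k)\subseteq D_{k-1}$; (4) follows from (4) at level $k-1$ and $M=\bigcup_i M^{k-1}_i\subseteq\bigcup_i\orb_f(D_k)$; and (5) is the cyclic clause of the lemma, carried by $f$ around $\orb_f(D_{k-1})$.

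It remains to see (2) at level $k$, i.e.\ that $f^0(D_k),\dots,f^{\alpha_k-1}(D_k)$ are pairwise disjoint or meet in one point. Writing $i=a\alpha_{k-1}+r$ and $i'=a'\alpha_{k-1}+r'$ with $0\le r,r'<\alpha_{k-1}$ and $0\le a,a'<n_k$, we have $f^i(D_k)\subseteq f^r(D_{k-1})$ and $f^{i'}(D_k)\subseteq f^{r'}(D_{k-1})$, so the case $r\ne r'$ follows from (2) at level $k-1$. If $r=r'$ (hence $a\ne a'$), then $f^i(D_k)$ and $f^{i'}(D_k)$ are the $f^r$-images of the almost-disjoint pieces $g^a(D_k)$, $g^{a'}(D_k)$ of $D_{k-1}$, and one must rule out $f^r$ gluing these along an arc. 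I would again invoke zero entropy: such a fold, combined with the cyclic motion of the $n_k$ inner pieces and of the $\alpha_{k-1}$ outer blocks $f^j(D_{k-1})$, manufactures an arc horseshoe for an iterate of $f$, contradicting \cite{KKM}.

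For the refinement lemma I would follow the one-dimensional pattern, with the no-horseshoe criterion \cite{KKM} in place of Sharkovsky's theorem. An infinite minimal set has no isolated point (otherwise its orbit closure would be a finite periodic orbit) and contains no arc (an arc admits no minimal self-map), so $N$ is a Cantor set, nowhere dense in its convex hull $\langle N\rangle\subseteq Y$, whose endpoints lie in $N$ and whose branch points are countable. Fix a point $c\in\langle N\rangle\setminus N$ of finite order that separates $N$ into $r\ge2$ parts — available since $\langle N\rangle$ is non-degenerate and $N$ is nowhere dense, so the interior of an arc of $\langle N\rangle$ with endpoints in $N$ contains points outside $N\cup B(\langle N\rangle)$. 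Let $V_1,\dots,V_r$ be the components of $\langle N\rangle\setminus\{c\}$ meeting $N$; since $c\notin N$, the sets $N\cap V_j$ form a clopen partition of $N$, and each $x\in N$ has an itinerary $(\tau_j(x))_{j\ge0}$ with $g^j(x)\in V_{\tau_j(x)}$. Zero entropy forces this symbolic system to degenerate: after a coarsening of the partition, $g$ respects it and permutes its $n\ge2$ classes along a single cycle — otherwise a genuinely entropic pattern of visits would let one choose subarcs of $\langle N\rangle$ crossing $c$ whose $g$-iterates cover one another, an arc horseshoe. One then upgrades this combinatorial cycle to a geometric one: there is a $g$-periodic orbit cutting $\langle N\rangle$ so that each coarsened class sits in the closure $D$ of one component, $D$ is $g^n$-periodic, its $n$ iterates meet pairwise in at most one point, and $N\subseteq\bigcup_i g^i(D)$ together with the cyclic action on pieces follow from the minimality of $N$.

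The main obstacle is this final upgrade — and the parallel step for (2) above — namely turning ``zero entropy'' into the statement that the pieces form a genuine geometric $n$-cycle on which no iterate folds one piece onto another along an arc. On the interval Sharkovsky-type theory gives this for free; in a general dendrite one must produce the offending arc horseshoe by hand from any hypothetical non-cyclic or folding configuration, and it is here that the dendrite geometry — uniqueness of arcs, boundary bumping, and the countability of $E(X)\cup B(X)$, used to locate separating points of finite order off $N$ — carries the weight. This is the technical heart of \cite{Askri,Askri2}.
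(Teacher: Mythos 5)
This proposition is not proved in the paper at all: it is imported verbatim from Askri (\cite[Proposition 3.4]{Askri2}), and the only original content the paper adds is the one-line remark that item (5) follows from items (1)--(4) (via $f(M)=M$, $f(M\cap f^i(D_k))\subseteq M\cap f^{i+1}(D_k)$, and the pairwise almost-disjointness of the $f^i(D_k)$). So the honest comparison is between your sketch and Askri's argument, and your sketch has the right overall shape --- the standard solenoidal/odometer-type decomposition built by recursively refining a cycle of periodic subdendrites, with \cite{KKM} (no arc horseshoe) playing the role that Sharkovsky--Misiurewicz--Sm\'{\i}tal theory plays on the interval. Your bookkeeping for (1), (3), (4) in the inductive step is correct, and your derivation of (5) from minimality matches what the paper asserts.

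However, as you yourself concede, the proposal is not a proof: everything is delegated to the ``refinement lemma,'' and the two decisive steps of that lemma are only asserted. First, you must show that a nontrivial splitting with $n\geq 2$ \emph{exists at every level} --- i.e.\ that an infinite minimal set of a zero-entropy dendrite map can never be ``indecomposable'' under $g=f^{\alpha_{k-1}}$; if the recursion could terminate, the whole nested structure collapses. Second, you must realize the combinatorial cycle geometrically: produce actual subdendrites $D, g(D),\dots,g^{n-1}(D)$ meeting pairwise in at most one point, and rule out an iterate of $f$ folding one piece onto another along an arc (this same folding issue reappears, unresolved, in your verification of (2) for indices with $r=r'$). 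In both places the phrase ``a genuinely entropic pattern of visits would let one choose subarcs \dots an arc horseshoe'' is exactly the content that needs a construction, and it is the technical heart of Askri's proof. There is also a smaller unproved assertion along the way (that an infinite minimal set of a dendrite map is a Cantor set, used to get the separating point $c$). So the route is plausible and consistent with the literature, but the argument as written has a genuine gap precisely where the difficulty lies.
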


While \eqref{cond:5} is not directly stated in \cite{Askri2}, it is an obvious consequence of the other statements.

Implicit in Proposition~\ref{prop:MinSolSet} is the idea that the minimal set $M$ has an odometer as a factor. Our next lemma shows that when $E(X)$ has countable closure, the factor map is invertible except on a countable set.

Given an increasing sequence $(\alpha_k)$ with $\alpha_k | \alpha_{k+1}$ for all $k$, we define the group $\Omega=\Omega(\alpha_k)$ of all $\theta\in\prod_{k=0}^\infty\mathbb{Z}/\alpha_k\mathbb{Z}$ such that $\theta_{k+1}$ is congruent to $\theta_k$ modulo $\alpha_k$ for all $k$, and we let $\tau$ denote the group rotation $\tau(\theta)=\theta+(1,1,1,\cdots)$. Then $(\Omega,\tau)$ is called the \emph{odometer} associated to the sequence $(\alpha_k)$.

\begin{lem}\label{lem:almostinvertible}
Let $X,f,M,(D_k),(\alpha_k)$ be as in Proposition~\ref{prop:MinSolSet} and suppose that $\overline{E(X)}$ is countable. Then
\begin{enumerate}
\item The sets $J_\theta=\bigcap_k f^{\theta_k}(D_k)$, $\theta\in\Omega$, are closed, connected, and pairwise disjoint.
\item There is a factor map $\pi:(M,f)\to(\Omega,\tau)$ which takes the value $\theta$ on $M\cap J_\theta$.
\item Each fiber $\pi^{-1}(\theta)$, $\theta\in\Omega$, is countable, and all but countably many of these fibers are singletons.
\end{enumerate}
\end{lem}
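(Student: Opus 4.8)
The plan is to establish the statements in the order (2), (3), (1): the factor map can be built directly, and then the pairwise disjointness in (1) will follow from the fiber estimate in (3). The one recurring observation is that any point lying in two distinct pieces $f^i(D_m)\ne f^j(D_m)$ at some level $m$ is periodic for $f$: by Proposition~\ref{prop:MinSolSet}(2) the intersection $f^i(D_m)\cap f^j(D_m)$ is a single point, $f$ sends it to the corresponding point of $f^{i+1}(D_m)\cap f^{j+1}(D_m)$, and after $\alpha_m$ steps it returns to itself. Since an infinite minimal set contains no periodic point, each $z\in M$ lies in exactly one piece $f^{j_m(z)}(D_m)$ at level $m$ (existence of some such piece being Proposition~\ref{prop:MinSolSet}(4)). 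By Proposition~\ref{prop:MinSolSet}(3) and the congruences defining $\Omega$, the sequence $(j_m(z))_m$ lies in $\Omega$, so $\pi(z):=(j_m(z))_m$ defines a map $\pi:M\to\Omega$, and one checks at once that $\pi^{-1}(\theta)=M\cap J_\theta$.

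For (2), $\pi$ is continuous because the sets $M\cap f^j(D_m)$ ($0\le j<\alpha_m$) are disjoint --- their pairwise intersections are periodic points, hence outside $M$ --- closed, and cover $M$, so they are clopen in $M$ and coincide with the $\pi$-preimages of the level-$m$ cylinders of $\Omega$; it is surjective since $\pi(M)$ is a nonempty closed $\tau$-invariant subset of the minimal system $(\Omega,\tau)$; and $j_m(f(z))\equiv j_m(z)+1\pmod{\alpha_m}$ gives $\pi\circ f=\tau\circ\pi$. (Closedness and connectedness of $J_\theta$, needed here only to identify the fibers, are proved with (1) below.)

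Part (3) is the core of the lemma and, I expect, the main obstacle. The plan is to prove that $M\cap J_\theta$ is a singleton for all but countably many $\theta$ and is never uncountable, by a counting argument tied to the tameness of $X$. If $z\ne z'$ lie in a common fiber then $[z,z']\subseteq J_\theta$; using the equivariance $f(J_\theta)=J_{\tau\theta}$ (valid since $f$ commutes with intersections of decreasing compacta and the $D_m$ are periodic) and minimality of $(\Omega,\tau)$, the branch- and endpoint-structure carried by such arcs is spread over a dense set of fibers, and one argues this structure must be supported on $\overline{B(X)\cup E(X)}$, which is countable; this bounds the exceptional $\theta$'s, and a parallel argument --- passing to a perfect subset of an uncountable fiber and taking its convex hull inside $J_\theta$ --- shows an uncountable fiber would force uncountably many endpoints of $X$. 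This is the step where the hypothesis that $\overline{E(X)}$ is countable is genuinely used, and where one needs fine information about how a minimal Cantor set embeds in such a dendrite, beyond the coarse data of Proposition~\ref{prop:MinSolSet}.

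Finally, for (1): Proposition~\ref{prop:MinSolSet}(1)--(3) with the congruences of $\Omega$ give $f^{\theta_{k+1}}(D_{k+1})\subseteq f^{\theta_k}(D_k)$, so $J_\theta$ is a decreasing intersection of nonempty subcontinua, hence nonempty, closed and connected. Suppose $p\in J_\theta\cap J_{\theta'}$ with $\theta\ne\theta'$, and let $k$ be least with $\theta_k\ne\theta'_k$; then $p$ is the (unique) common point of $f^{\theta_k}(D_k)$ and $f^{\theta'_k}(D_k)$, so $p$ is periodic, and iterating $f^{\alpha_k}$ while using Proposition~\ref{prop:MinSolSet}(3) shows $p$ lies in every piece, at every level, contained in $f^{\theta_k}(D_k)$. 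Now $M\cap f^{\theta_k}(D_k)$ is uncountable (a nonempty clopen subset of the Cantor set $M$) and is the disjoint union of the fibers $M\cap J_\eta$ over those $\eta\in\Omega$ with $\eta_j=\theta_j$ for $j\le k$; since each such fiber is countable by (3), uncountably many of them are nonempty, and each nonempty one makes $J_\eta$ a nondegenerate subdendrite containing $p$, with distinct such $J_\eta$ meeting pairwise only in $p$. Choosing an arc from $p$ into each then yields uncountably many components of $X\setminus\{p\}$; but the closure of any component of $X\setminus\{p\}$ is a nondegenerate subdendrite possessing an endpoint other than $p$, such an endpoint lies in that (open) component and is therefore an endpoint of $X$, and distinct components give distinct endpoints --- so $E(X)$, hence $\overline{E(X)}$, would be uncountable, a contradiction. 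Therefore the $J_\theta$ are pairwise disjoint.
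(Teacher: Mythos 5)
Your parts (1) and (2) are essentially sound. The well-definedness of $\pi$ via ``a point in two distinct pieces at level $m$ is periodic, hence not in $M$'' matches the paper's reasoning, and your disjointness argument for the $J_\theta$'s is a valid alternative to the paper's: where the paper takes the sets $J_{\tau^{n\alpha_k}(\theta)}$, all containing the periodic point $a$, and uses the Mai--Shi lemma (diameters of pairwise disjoint connected sets tend to $0$) to force $a\in M$, you instead show $p$ lies in uncountably many pairwise-almost-disjoint nondegenerate $J_\eta$'s and count components of $X\setminus\{p\}$ against $E(X)$. Note, though, that you do not need fiber countability for that count: surjectivity of $\pi$ already gives $M\cap J_\eta\neq\emptyset$ for \emph{every} $\eta$ in the cylinder, so you should cite (2) rather than (3) there and avoid any circularity.

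The genuine gap is part (3), which you yourself flag as ``the main obstacle'' and then only sketch. Two concrete ideas are missing. First, once the $J_\theta$ are pairwise disjoint, the ``all but countably many fibers are singletons'' half is immediate: pairwise disjoint connected subsets of a dendrite can have positive diameter only countably often, and a fiber with two points forces $J_\theta\supseteq[z,z']$ to be nondegenerate — so the natural order is (1) before (3), not the reverse. Second, and more importantly, your plan for bounding a single fiber (``the structure must be supported on $\overline{B(X)\cup E(X)}$'', ``convex hull of a perfect subset'') never identifies the actual mechanism: since $\tau^n(\theta)\neq\theta$ for $n\geq 1$, the set $J_\theta$ never returns to itself, so minimality of $M$ forces $M\cap J_\theta\subseteq\Bd(J_\theta)$ (a point of $M$ in $\Int J_\theta$ would have to recur to $\Int J_\theta$). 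The boundary in $X$ of the subdendrite $J_\theta$ is contained in $E(J_\theta)\cup B(X)\cup\overline{E(X)}$, which is countable because $B(X)$ is always countable and the endpoint set of a subdendrite has cardinality at most that of $E(X)$. Without this non-recurrence observation, your convex-hull argument at best rules out uncountable fibers (and even that requires the subdendrite-endpoint fact you do not state), and does not yield countability; as written, part (3) — and hence the statement of the lemma as a whole — is not proved.
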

\begin{proof}
It is clear from Proposition~\ref{prop:MinSolSet} that each set $J_\theta$ is closed, connected, and has non-empty intersection with $M$. It is also clear that $f(J_\theta)=J_{\tau(\theta)}$. However, since the sets $f^i(D_k)\cap f^j(D_k)$ are allowed to intersect at a point, it is not clear if the sets $J_\theta$ are pairwise disjoint. We prove this fact now. Suppose there are $\theta\neq\theta'$ with $J_\theta\cap J_{\theta'}\neq\emptyset$. Find $k$ minimal such that $\theta_k\neq\theta'_k$. Then clearly
\begin{equation*}
J_\theta \cap J_{\theta'} = f^{\theta_k}(D_k) \cap f^{\theta'_k}(D_k) = \{a\}
\end{equation*}
for some single point $a\in X$. Taking the image by $f^{\alpha_k}$ we have
\begin{equation*}
f^{\alpha_k}(a) \in f^{\alpha_k}(J_\theta)\cap f^{\alpha_k}(J_{\theta'}) = J_{\tau^{\alpha_k}(\theta)} \cap J_{\tau^{\alpha_k}(\theta')} = f^{\theta_k+\alpha_k}(D_k) \cap f^{\theta'_k+\alpha_k}(D_k) = \{a\},
\end{equation*}
since $D_k$ is periodic with period $\alpha_k$. This shows that $a$ is periodic with period $\alpha_k$. In particular, it does not belong to the infinite minimal set $M$. Now for $n\in\mathbb{N}$ let $J_n=J_{\tau^{n\alpha_k}(\theta)}$. Then $a\in J_n$ and we can choose an additional point $m_n \in M \cap J_n$ for all $n$. Thus the $J_n$'s are nondegenerate subdendrites and intersect pairwise only at $a$. In particular, the sets $(a,m_n]$ are pairwise disjoint connected subsets of $X$, so their diameters must converge to zero (see eg.~\cite[Lemma 2.3]{MaiShi}). But since $M$ is closed, this shows that $a\in M$, a contradiction.

Now that the sets $J_\theta$, $\theta\in\Omega$ have been shown to be pairwise disjoint, we see immediately that $\pi$ is well-defined. It is also easy to see that $\pi$ is continuous and $\tau\circ\pi = \pi\circ f$.

Again, since the sets $J_\theta$, $\theta\in\Omega$ are pairwise disjoint connected sets in $X$, only countably many of them can have positive diameter. It follows that $\pi^{-1}(\theta)$ is a singleton except for countably many $\theta$. It remains to show that $M\cap J_\theta$ is countable when $J_\theta$ is non-degenerate. Since $M$ is minimal and $J_\theta$ never returns to itself, we must have $M\cap J_\theta \subset \Bd(J_\theta)$, where $\Bd(J_{\theta})$ stands for the boundary of $J_{\theta}$. But the boundary in $X$ of the subdendrite $J_\theta$ is a subset of $E(J_\theta) \cup B(X) \cup \overline{E(X)}$, which is countable by the assumption that $E(X)$ has countable closure. Here we use the well-known facts that $B(X)$ is countable in any dendrite, and the cardinality of the endpoint set of a dendrite cannot increase when we pass to a subdendrite, see e.g.~\cite{Nad, Nagh}.
\end{proof}

The next Lemma strengthens~\cite[Lemma 3.5]{Askri2} by relaxing the condition that $E(X)$ be closed.
\begin{lem}[{\cite[Lemma 3.5]{Askri2}}]\label{lm:decomp}
Let $X,f,M,(D_k),(\alpha_k)$ be as in Proposition~\ref{prop:MinSolSet} and suppose that $\overline{E(X)}$ is countable. Then there is $N\geq 1$ such that $\forall k\geq N, f^{i_k}(D_k)$ is a free arc for some $0\leq i_k\leq \alpha_k-1$.
\end{lem}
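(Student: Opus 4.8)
The plan is to use the odometer factor structure of Lemma~\ref{lem:almostinvertible} to reduce the statement to a cardinality argument on $\Omega$, proceeding by contradiction. Suppose the conclusion fails: for infinitely many $k$ none of the $\alpha_k$ iterates $f^i(D_k)$, $0\le i<\alpha_k$, is a free arc. I would first record two routine facts. (a) Every $f^i(D_k)$ is non-degenerate: if some $M^k_i=M\cap f^i(D_k)$ were finite then, by \eqref{cond:5}, every $M^k_j$ would be finite and hence $M=\bigcup_{j<\alpha_k}M^k_j$ (a union by property (4) of Proposition~\ref{prop:MinSolSet}) would be finite; so each $M^k_i$ is infinite and $M^k_i\subseteq f^i(D_k)$. (b) A non-degenerate subdendrite $D\subseteq X$ is a free arc if and only if $D\cap B(X)=\emptyset$, since the order of a point in $D$ cannot exceed its order in $X$ and a non-degenerate dendrite with no branch points is an arc. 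Thus, for the ``bad'' values of $k$, every iterate $f^i(D_k)$ contains a branch point of $X$.

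Next I would set up the odometer picture. Fix $\theta\in\Omega$. Using $D_{k+1}\subseteq D_k$ (property (3)), $f^{\alpha_k}(D_k)=D_k$ (property (1)), and $\theta_{k+1}\equiv\theta_k\pmod{\alpha_k}$, one gets $f^{\theta_{k+1}}(D_{k+1})\subseteq f^{\theta_{k+1}}(D_k)=f^{\theta_k}(D_k)$, so $(f^{\theta_k}(D_k))_{k\ge0}$ is a decreasing sequence of non-degenerate subdendrites with intersection $J_\theta$. By Lemma~\ref{lem:almostinvertible}(1) the sets $J_\theta$, $\theta\in\Omega$, are pairwise disjoint, and, as shown in the proof of that lemma, only countably many are non-degenerate; set $G=\{\theta\in\Omega:J_\theta=\{x_\theta\}\text{ is a singleton}\}$, so $\Omega\setminus G$ is countable and $\theta\mapsto x_\theta$ is injective on $G$. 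The crucial observation is that for $\theta\in G$ one has $\diam f^{\theta_k}(D_k)\to0$ (a decreasing sequence of compacta with singleton intersection) while $x_\theta\in J_\theta\subseteq f^{\theta_k}(D_k)$ for all $k$; hence, if $f^{\theta_k}(D_k)$ contains a branch point for infinitely many $k$, those branch points converge to $x_\theta$ and so $x_\theta\in\overline{B(X)}$.

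To conclude, let $\mathcal{B}\subseteq\Omega$ be the set of $\theta$ for which $f^{\theta_k}(D_k)$ fails to be a free arc for infinitely many $k$. By the previous step, $\theta\mapsto x_\theta$ maps $\mathcal{B}\cap G$ injectively into $\overline{B(X)}$, and $\overline{B(X)}$ is countable because $B(X)$ is countable and accumulates only on $\overline{E(X)}$, which is countable by hypothesis. Hence $\mathcal{B}=(\mathcal{B}\cap G)\cup(\mathcal{B}\setminus G)$ is countable, so $\mathcal{B}\neq\Omega$ since $\Omega$ is uncountable (indeed a Cantor set, as $\alpha_k\to\infty$). On the other hand, were the lemma false, then for every bad $k$ and every $\theta\in\Omega$ the iterate $f^{\theta_k}(D_k)$ --- one of the $\alpha_k$ iterates of $D_k$ --- would not be a free arc, whence every $\theta$ lies in $\mathcal{B}$ and $\mathcal{B}=\Omega$, a contradiction. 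Therefore a suitable $N\ge1$ exists.

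The conceptual core is the step turning a $D_k$-orbit that is persistently littered with branch points into an accumulation point of $B(X)$, and this is precisely where the weaker hypothesis ``$\overline{E(X)}$ countable'' replaces ``$E(X)$ closed'', namely through the countability of $\overline{B(X)}$. The main obstacle I anticipate is bookkeeping: verifying carefully that all the $f^i(D_k)$ are non-degenerate and that $(f^{\theta_k}(D_k))_k$ is genuinely a decreasing sequence of compacta, so that ``not a free arc'' is really witnessed by a branch point and $\diam f^{\theta_k}(D_k)\to0$ can legitimately be invoked.
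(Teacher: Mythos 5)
Your proof is correct and follows essentially the same route as the paper: both arguments use Lemma~\ref{lem:almostinvertible} to get uncountably many singleton fibers $J_\theta$, pick one avoiding a countable exceptional set, and let the nested sets $f^{\theta_k}(D_k)$ shrink onto that singleton to force them eventually into free arcs. The only difference is cosmetic --- the paper cites the decomposition of $X$ into countably many free arcs plus a countable set from~\cite{BBPRV} to place the singleton in the interior of a free arc, whereas you work directly with the countability of $\overline{B(X)}$ and the characterization of free arcs as branch-point-free subdendrites.
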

\begin{proof}
Using Lemma~\ref{lem:almostinvertible} we know that there are uncountably many singleton sets $J_\theta$. Now a dendrite whose endpoint set has countable closure is always the union of a countable sequence of free arcs and a countable set, see~\cite[Theorem 2.2]{BBPRV}. It follows that we can find $\theta$ with the singleton $J_\theta$ in the interior of a free arc $A$ in $X$. Since $J_\theta$ is the nested intersection $\bigcap_N f^{\theta_N}(D_N)$ we can find $N$ large enough that $f^{\theta_N}(D_N)$ is contained in $A$. Then $f^{\theta_k}(D_k)$ is a free arc for all $k\geq N$.
\end{proof}

Our next result is a good first step in showing that recurrent points are minimal. It is a modified version of~\cite[Theorem 1.1]{Askri}, and the proof closely follows the one from that paper.

\begin{lem}\label{lem:AskriA}
Let $X$ be a dendrite with $\overline{E(X)}$ countable, $f:X\to X$ a continuous map with zero topological entropy, and $x\in X$ a point which is recurrent but not periodic. Then $\omega(x)$ contains no periodic points.
\end{lem}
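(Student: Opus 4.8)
The plan is to argue by contradiction: suppose $x$ is recurrent but not periodic and $\omega(x)$ contains a periodic point $p$. First I would reduce to a convenient setting by replacing $f$ with a suitable iterate $f^q$ (where $q$ is the period of $p$) so that $p$ becomes a fixed point; note this preserves zero entropy, and while it may break the orbit of $x$ into finitely many pieces, $x$ remains recurrent and non-periodic for some power, so $x$ still lies in its own $\omega$-limit set under $f^q$ and that set still contains the fixed point $p$. Then I would pass to the subdendrite $\omega(x)$ itself, or more precisely work inside $X$ but focus on the structure near $p$.

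The core of the argument, following Askri's approach in \cite{Askri}, is a local analysis at the fixed point $p$. Since $x\in\omega(x)$ and $p\in\omega(x)$, the orbit of $x$ repeatedly approaches both $x$ and $p$. Consider the connected components of $X\setminus\{p\}$; call them the branches at $p$. The key dichotomy is whether the orbit of $x$ eventually stays in one branch or visits infinitely many branches (or returns to the same branch from ``different sides,'' i.e. crosses $p$). If the orbit crosses $p$ infinitely often, or visits some branch and then another and comes back, I would extract an arc horseshoe: pick an arc $I$ on one side of $p$ and an arc $J$ on another side (or the same side) such that forward iterates of $I$ and $J$ each cover $I\cup J$, using the fact that the orbit of $x$ is dense in $\omega(x)$ and that $f(p)=p$ forces long arcs through $p$ to map across $p$. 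This contradicts $\htop(f)=0$ via \cite{KKM}. Ruling out the horseshoe forces the orbit of $x$ to eventually be confined, after some time $n_0$, to the closure of a single branch $C$ at $p$ with $p\in\overline{\orb(f^{n_0}(x))}$ still holding — but then one iterates this observation: $f$ maps this branch-closure into itself (or a controlled nearby piece), and one looks at the sequence of ``first points'' of the orbit segments, building a nested sequence of subdendrites shrinking toward $p$. Because $x\in\omega(x)$, the orbit must return arbitrarily close to $x$, which lies at a definite distance from $p$; combined with the confinement this yields either periodicity of $x$ (contradiction) or another horseshoe.

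The main obstacle I anticipate is making the horseshoe extraction rigorous in the dendrite setting, where ``sides of $p$'' must be tracked through the tree structure rather than the simple left/right of an interval: one must choose the two arcs $I,J$ and the covering times $n,m$ carefully so that $f^n(I)\cap f^m(J)\supset I\cup J$ with $I,J$ meeting only possibly at one endpoint. Here I would lean on Proposition~\ref{prop:MinSolSet} and Lemmas~\ref{lem:almostinvertible}–\ref{lm:decomp}: if $\omega(x)$ were infinite we could try to apply the periodic-subdendrite machinery, but more directly, the countability of $\overline{E(X)}$ guarantees (via \cite[Theorem 2.2]{BBPRV}) that $X$ is a countable union of free arcs plus a countable set, so the relevant local structure at $p$ involves only finitely many ``fat'' directions, which tames the combinatorics. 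A secondary technical point is the base case where $\omega(x)$ might be finite: then $x$ itself would be (eventually) periodic since a recurrent point in a finite $\omega$-limit set is periodic, directly contradicting the hypothesis, so one may assume $\omega(x)$ is infinite and deploy the structure theorems. I expect the confinement-and-horseshoe dichotomy, iterated to produce a shrinking nest of subdendrites around $p$ that the recurrent orbit cannot respect, to be exactly the mechanism, mirroring \cite[Theorem 1.1]{Askri} with $E(X)$ replaced by $\overline{E(X)}$ throughout.
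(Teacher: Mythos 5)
Your plan correctly identifies the overall strategy (argue by contradiction, reduce to a fixed point, play horseshoes against zero entropy, follow \cite{Askri}), but as written it has a genuine gap: the mechanism you propose for the confined case --- ``a nested sequence of subdendrites shrinking toward $p$'' that the recurrent orbit ``cannot respect'' --- is not an argument, and it is not what actually closes the proof. The paper's engine is a reduction to the interval: one first manufactures an auxiliary periodic point $c$ near the orbit of $x$ (by taking three small pairwise disjoint arc neighborhoods of points of $\omega_f(x)$ inside a free arc and invoking \cite[Theorem 2.13]{MaiShi}), and then proves that $\bigcup_n g^n([a,x])$, $g=f^r$, closes up to an invariant \emph{arc} $I$ with $[a,x]\subseteq I\subseteq[a,c]$; the two ways this containment could fail each produce an explicit arc horseshoe. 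Once the recurrent point $x$ is trapped in the invariant arc $I$, the contradiction comes from Misiurewicz's theorem that an infinite $\omega$-limit set of a zero-entropy \emph{interval} map contains no periodic point \cite{Mis}. Without this reduction --- and without the auxiliary periodic point $c$, which you never construct --- your confinement branch of the dichotomy does not terminate.

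Second, your local analysis at $p$ tacitly assumes $p$ has a free arc neighborhood, but in a dendrite with $\overline{E(X)}$ countable the periodic point may be a branch point of infinite order or an accumulation point of branch points, where ``only finitely many fat directions'' fails. The paper needs a separate second step precisely for this: since $L=\omega_f(x)$ is uncountable and $X$ is a countable union of free arcs plus a countable set \cite{BBPRV}, some free arc $C$ \emph{away from} $a$ meets $L$ in an uncountable set; one finds a periodic point $c\in C$ as before and then uses weak incompressibility of $\omega$-limit sets \cite{SKSF} to force the orbit to cross $c$ between the two components of $X\setminus\{c\}$, which yields a horseshoe. Finally, be careful with your suggestion to lean on Proposition~\ref{prop:MinSolSet} and Lemmas~\ref{lem:almostinvertible}--\ref{lm:decomp}: that machinery applies only to \emph{minimal} infinite $\omega$-limit sets, and $\omega_f(x)$ is not known to be minimal at this stage --- minimality is exactly what Theorem~\ref{thm:recmin} later deduces \emph{from} this lemma --- so invoking it here would be circular or simply inapplicable.
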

\begin{proof}
Throughout the proof we will use freely the following well-known property of $\omega$-limit sets (e.g. \cite{BlockCoppel}): if for fixed $n\geq 2$ we write $W_i=\omega_{f^n}(f^i(x))$ for $0\leq i<n$, then $\omega_f(x)=\bigcup_{i=0}^{n-1} W_i$ and $f(W_i)=W_{i+1 \; (\text{mod }n)}$. In particular, if $\omega_f(x)$ is uncountable, then so is each $W_i$, and if $\omega_f(x)$ contains a given fixed point, then each $W_i$ contains it as well. We continue to use the notation $[x,y]$ for the unique arc in $X$ with endpoints $x,y\in X$, and if $z\in(x,y)=[x,y]\setminus\{x,y\}$ we will say for simplicity that $z$ lies \emph{between} $x$ and $y$.

Now let $L=\omega_f(x)$ where $x$ is recurrent but not periodic. Then $L$ is the closure of the orbit of $x$, hence it is a perfect uncountable set.

\textbf{Step 1:} \emph{$L$ does not contain a periodic point with a free arc neighborhood in $X$}.

For suppose to the contrary that $a\in L$, $f^N(a)=a$, and some free arc $C$ is a neighborhood of $a$ in $X$. Then by the standard properties mentioned above $a\in\omega_{f^N}(f^i(x))$ for some $0\leq i<N$. Replacing $f$ with its iterate and $x$ with its image we may safely assume that $N=1$ and $i=0$, i.e.\ $a$ is a fixed point in $L=\omega_f(x)$.

Since periodic points are never isolated in infinite $\omega$-limit sets we know that $L$ accumulates on $a$ from at least one side in the free arc $C$. So choose an endpoint $b$ of $C$ such that $L\cap [a,b]$ accumulates on $a$. For convenience we let $C$ carry its natural order as an arc, oriented in such a way that $a<b$. Choose five points $y_i\in L\cap [a,b]$ with $a<y_1<y_2<y_3<y_4<y_5<b$. Choose three small arc neighborhoods $I_2, I_3, I_4$ containing $y_2,y_3,y_4$ respectively and let them be pairwise disjoint and lie between $y_1$ and $y_5$. Since the orbit of $x$ visits each of these neighborhoods $I_i$ infinitely often, there must be points in $I_2$ and $I_4$ which visit $I_3$, so by~\cite[Theorem 2.13]{MaiShi} $f$ has a periodic point $c$ between $y_1$ and $y_5$. Replacing $x$ with a point from its orbit near $y_1$ we may assume that $a<x<c<y_5<b$. Let $r$ be the period of $c$ and put $g=f^r$. Since $a$ was already fixed for $f$ we have $a\in\omega_g(x)$ as well. Note that since $x$ is recurrent for $f$ it is also recurrent for $g$.

\textbf{Claim:} \emph{There is an arc $I$ invariant for $g$ with $[a,x]\subseteq I \subseteq [a,c]$}.

To prove the claim put $I=\overline{\bigcup_{n=0}^{\infty} g^n([a,x])}$. Since $a$ is fixed and $x$ is recurrent, it suffices to show that $g^n([a,x]) \subseteq [a,c]$ for all $n$. If this is not true, then there is $z\in[a,x]$ and $n_0 \geq 1$ such that $a$ is between $g^{n_0}(z)$ and $c$ or $c$ is between $g^{n_0}(z)$ and $a$. We treat these two cases separately.

Suppose first that $a$ is between $g^{n_0}(z)$ and $c$. Then $a\in g^{n_0}([z,c])$, so there is $a_{-1}$ between $z$ and $c$ with $g^{n_0}(a_{-1})=a$. Then $f^n(a_{-1})=a$ for all $n\geq n_0\cdot r$. Since $L\cap [a,b]$ accumulates on $a$ we can find a point $x'\in\orb_f(x)$ between $a$ and $a_{-1}$. Since $y_5\in\omega_f(x)$ we can find $n\geq n_0\cdot r$ such that $f^n(x')$ is close to $y_5$ and $a<x'<a_{-1}<c<f^n(x')$. Put $J=[a,x']$ and $K=[x',a_{-1}]$. Then $f^n(J)\cap f^n(K) \supseteq J\cup K$, so $f$ possess an arc horseshoe and thus has positive topological entropy, a contradiction.

Suppose instead that $c$ is between $g^{n_0}(z)$ and $a$. Then $c\in g^{n_0}([a,x])$, so there must be $c_{-1}$ between $a$ and $x$ with $g^{n_0}(c_{-1})=c$. Since $a\in\omega_g(x)$ we can choose $n>n_0$ with $g^n(x)$ close to $a$ so that $g^n(x)<c_{-1}<x<c$. Put $J=[c_{-1},x]$ and $K=[x,c]$. Then again $g^n(J)\cap g^n(K)\supseteq J\cup K$, so $g$ has positive topological entropy and  so does $f$, a contradiction. This completes the proof of the claim.

Now we may use the claim to finish Step 1. Since $x$ belongs to the closed invariant set $I$ we have $\omega_g(x)=\omega_{g|_I}(x)$. But $g|_I$ is an interval map, and when an infinite $\omega$-limit set for an interval map contains a periodic point, the topological entropy must be positive (see~\cite{Mis}), a contradiction.

\textbf{Step 2:} \emph{$L$ does not contain any periodic points.}

Suppose to the contrary that $a\in L$ is a periodic point. As in Step 1 we may assume that $a$ is fixed. By~\cite[Theorem 2.2]{BBPRV} the dendrite $X$ is the union of a countable sequence of free arcs together with a countable set. In particular, we can find a free arc $C$ not containing $a$ with $L\cap C$ uncountable. Write $C=[u,v]$ with $v$ between $u$ and $a$ and let $<$ denote the order in $C$ with $u<v$. Since $L\cap C$ is infinite we may choose four points $x_i\in\orb_f(x)$ with $u<x_1<x_2<x_3<x_4<v$. As in Step 1 we can use small arc neighborhoods of $x_2,x_3,x_4$ to find a periodic point $c$ with $u<x_1<c<v$, and since $x_1$ is in the orbit of $x$ we may redefine $x=x_1$ without changing $\omega_f(x)$. Let $r$ denote the period of $c$ and put $g=f^r$. Since $x$ is recurrent also for $g$ we have $\orb_g(x)\cap[u,c]$ infinite, so we can find two points $x_5,x_6\in\orb_g(x)$ with $u<x_5<x_6<c$ and passing forward along the orbit we can redefine $x=x_6$ without changing $\omega_g(x)$. In particular, $x_5\in\omega_g(x)=\overline{\orb_g(x)}$, so we can choose $p\geq1$ with $g^p(x)$ close to $x_5$ so that $u<g^p(x)<x<c$.

Let $l=\omega_{g^p}(x)$. We have $x\in l$ because $x$ is recurrent and $a\in l$ because $a$ is a fixed point in $\omega_f(x)$. Moreover $c\not\in l$ as a result of Step 1. So let $X_0,$ $X_1$ denote the connected components of $X\setminus\{c\}$ containing $x$ and $a$, respectively, and put $l_i=l\cap X_i$. Then $l=l_0\cup l_1$ expresses $l$ as the disjoint union of two nonempty open subsets (in the topology induced from $X$ to $l$). Recall that every $\omega$-limit set $\omega_f(x)$ is weakly incompressible, i.e. $f(\overline{U})\not\subset U$ for any set $U\subsetneq \omega_f(x)$ open in $\omega_f(x)$ (see, e.g., \cite{SKSF}). Thus we have $g^p(l_0)\cap l_1 \neq \emptyset$. Therefore we may choose $y\in l_0$ with $g^p(y)\in l_1$, and since $\orb_{g^p}(x)$ is dense in $l_0$ we may choose $y$ from the orbit of $x$. We finish the proof in two cases, depending on the location of $y$.

Suppose first that $y$ is between $x$ and $c$. In the ordering of the arc $[g^p(x),g^p(y)]$ we have $g^p(x) < x < y < c < g^p(y)$.  Put $I=[x,y]$ and $J=[y,c]$. Clearly $g^p(I)\supseteq I\cup J$. Since $y\in\orb_g(x)$ we have $\omega_{g}(y)=\omega_{g}(x)\supset\orb_{g}(x)\ni g^p(x)$. In particular, we may choose $n>p$ to make $g^{n}(y)$ as close to $g^p(x)$ as we like, so that $x,y \in [g^{n}(y),c]$. But then $g^{n}(J) \supseteq I\cup J$. We conclude that $g$ possess an arc horseshoe and thus $g$ has positive topological entropy, which is a contradiction with $h_{top}(f)=0$.

Suppose instead that $x$ is between $y$ and $c$. Then $c\in[g^p(x),g^p(y)]$, so there is $c_{-1}\in(x,y)$ with $g^p(c_{-1})=c$. In the ordering of the arc $[y,g^p(y)]$ we have $y<c_{-1}<x<c$ and we also have $x\in(g^p(x),c)$. Put $I=[c_{-1},x]$ and $J=[x,c]$. Since $y\in\orb_g(x)\subset\omega_g(x)$ we can find $n>p$ with $g^n(x)$ as close to $y$ as we like. In particular we can get $x,c_{-1}\in[g^n(x),c]$. But then $g^n(I) \cap g^n(J) \supseteq I\cup J$. Again we conclude that $g$ has positive topological entropy, which is a contradiction. This ends the proof.
\end{proof}

\begin{thm}\label{thm:recmin}
If $X$ is a dendrite in which $\overline{E(X)}$ is countable and if $f\colon X\to X$ has zero topological entropy, then every recurrent point for $f$ is minimal.
\end{thm}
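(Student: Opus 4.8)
The plan is to separate off the trivial periodic case and then, for a recurrent non-periodic point $x$, to transport the problem onto a free arc, where the analogous statement for interval maps is classical.

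If $x$ is periodic then $\orb_f(x)$ is a finite minimal set and there is nothing to prove, so suppose $x$ is recurrent but not periodic. Then $L:=\omega_f(x)=\overline{\orb_f(x)}$ is infinite, and by Lemma~\ref{lem:AskriA} it contains no periodic point. Using Zorn's lemma pick a minimal subset $M\subseteq L$; since $M$ contains no periodic point it is infinite, so Proposition~\ref{prop:MinSolSet} applies and produces periodic subdendrites $D_k$ of periods $\alpha_k$. By Lemma~\ref{lm:decomp} we may fix some $k$ and an index $i_k$ so that $A:=f^{i_k}(D_k)$ is a free arc; since $D_k$ has period $\alpha_k$, the arc $A$ is invariant under $g:=f^{\alpha_k}$, and $g|_A$ is an interval map with zero topological entropy.

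The crucial geometric step is to show that the forward orbit of $x$ eventually meets $A$. Here I would use that $M$ is uncountable: by Lemma~\ref{lem:almostinvertible} there is a factor map from $M$ onto the odometer $\Omega$, and $\Omega$ is uncountable. On the other hand $M\subseteq\orb_f(D_k)=\bigcup_{i=0}^{\alpha_k-1}f^i(D_k)$, and the boundary in $X$ of each subdendrite $f^i(D_k)$ is contained in $B(X)\cup\overline{E(X)}\cup E(f^i(D_k))$, which is countable by hypothesis. Hence $M$ contains a point lying in the interior (taken in $X$) of some piece $f^{i_0}(D_k)$; as $M\subseteq L=\omega_f(x)$, the orbit of $x$ enters this open set, hence enters the closed invariant set $\orb_f(D_k)$ and stays there, and since $f$ maps $f^i(D_k)$ onto $f^{i+1\bmod\alpha_k}(D_k)$, some further iterate $y:=f^m(x)$ lies in $A$.

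The last step is the interval argument. Since $y=f^m(x)\in\orb_f(x)\subseteq\omega_f(x)=\omega_f(y)$, the point $y$ is recurrent for $f$, hence also for $g=f^{\alpha_k}$; and since $A$ is $g$-invariant and contains the entire $g$-orbit of $y$, the point $y$ is recurrent for the zero-entropy interval map $g|_A$. By the well-known interval version of the theorem (every recurrent point of a zero-entropy interval map is minimal, see e.g.~\cite{BlockCoppel}), $\omega_g(y)$ is $g$-minimal. Then $\omega_f(y)=\bigcup_{i=0}^{\alpha_k-1}f^i(\omega_g(y))$ is a finite union of $g$-minimal sets cyclically permuted by $f$, hence $f$-minimal; and since $\omega_f(y)=\omega_f(x)$ with $x\in\omega_f(x)$, the point $x$ is minimal. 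I expect the middle step — forcing the orbit of $x$ into the free-arc region — to be the main obstacle, and it is precisely where the hypothesis that $\overline{E(X)}$ (rather than merely $E(X)$) is countable does real work, by making the boundaries of the subdendrites $f^i(D_k)$ countable and hence negligible against the uncountable minimal set $M$. Everything else — moving recurrence and minimality between $f$, the iterate $f^{\alpha_k}$ and the invariant free arc $A$, together with the union-of-minimal-sets bookkeeping — is routine.
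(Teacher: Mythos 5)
Your proposal is correct and follows essentially the same route as the paper: exclude periodic points from $\omega_f(x)$ via Lemma~\ref{lem:AskriA}, pass to an infinite minimal subset to invoke Proposition~\ref{prop:MinSolSet} and Lemma~\ref{lm:decomp}, push the orbit of $x$ into the periodic free arc $A=f^{i_k}(D_k)$, and finish with the classical fact that recurrent points of zero-entropy interval maps are minimal. The only (harmless) variation is in the middle step: you locate a point of the uncountable set $M$ in the interior of some piece $f^{i_0}(D_k)$ using countability of the boundaries and then ride the cyclic permutation of the pieces into $A$, whereas the paper shows directly that the sets $L_i=\omega_f(x)\cap f^i(D_k)$ are pairwise disjoint, cyclically permuted and each uncountable, so that $\omega_f(x)$ already meets the interior of the free arc itself.
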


\begin{proof}
Let $x\in\Rec(f)$. If $x$ is periodic then it is minimal, so assume $x$ is not periodic. Let $L=\omega(x)$. Let $M\subset L$ be a minimal set. By Lemma~\ref{lem:AskriA} $L$ contains no periodic orbits, so $M$ is an infinite minimal set. Then Proposition \ref{prop:MinSolSet} applies and we get a sequence of $f$-periodic subdendrites $(D_k)_{k\geq 1}$ and periods $(\alpha_k)$ satisfying properties (1)--(5) of that Proposition. By Lemma~\ref{lm:decomp} for all sufficiently large $k$ we have that $f^i(D_k)$ is a free arc for suitable $i$. Since $M$ is infinite and $D_k$ is periodic, we have $M\cap \Int f^i(D_k)\neq\emptyset$ and as a consequence $Orb_f(x)\cap D_k\neq \emptyset$, for every sufficiently large $k$. Hence $\bigcap_{k\geq 1}Orb_f(D_k)$ contains $L$, that is, property (4) still holds with $L$ in the place of $M$.

We claim that property (5) also holds with $L$ in the place of $M$. Fix $k$ and denote $L_i=f^i(D_k)\cap L$ for $0\leq i<\alpha_k-1$. Observe that $L$ does not contain periodic points, and the set
$\orb(f^i(D_k)\cap f^j(D_k))$ is always finite and invariant for any $i\neq j$ (can be empty)
and hence $L_i \cap f^j(D_k)=\emptyset$ for $i\neq j$. This shows that the sets $L_i \cap L_j=\emptyset$ for $i\neq j$. Clearly $f(L_i) \subseteq L_{i+1 (\text{mod }\alpha_k)}$, and $f(L)=L$ since $\omega$-limit sets are always mapped onto themselves. This shows that $f(L_i)=L_{i+1 (\text{mod } \alpha_k)}$. In particular, we conclude that $L_i$ is uncountable for each $i$.

Again using Lemma~\ref{lm:decomp} choose $k$ large enough that $f^i(D_k)$ is a free arc for some $0\leq i<\alpha_k-1$ and let $A=f^i(D_k)$ denote that free arc. We have just shown that $L_i=A\cap L$ is uncountable, so since $A$ is a free arc there are points from $\omega(x)$ in its interior. Thus we can find a point $y=f^l(x)$ from the forward orbit of $x$ in $A$. Then $\omega_f(x)=\omega_f(y)$ and $y$ is also recurrent for $f$. 
Since $\Rec(f^{\alpha_k})=\Rec(f)$, $y$ is also recurrent for $f^{\alpha_k}$. But the restriction of $f^{\alpha_k}$ to $A$ is an interval map with zero topological entropy. For such a map, all recurrent points are minimal points, see e.g.~\cite[Chapter VI. Proposition 7]{BlockCoppel}. Thus $y$ is a minimal point for $f^{\alpha_k}$, and hence also for $f$. This shows that $\omega_f(y)=\omega_f(x)$ is a minimal set, and hence $x$ itself is minimal.
\end{proof}

\section{Discrete Spectrum in  Dendrites with $\overline{E(X)}$ Countable}\label{sec:spectrum}

By \cite[Theorem~1.5]{Opr_Quasi} each one-sided subshift with zero entropy can be extended to a dynamical system on the Gehman dendrite
with zero topological entropy. This provides a plethora of examples of dynamical systems on a dendrite with a closed set of endpoints,
having zero topological entropy and invariant measures which do not have discrete spectrum.
But in the Gehman dendrite $E(X)$ is uncountable, since $E(X)$ is a Cantor set. 
On the other hand, each dendrite with $E(X)$ uncountable contains a copy of the Gehman dendrite (e.g. see \cite{Charatonik}, cf. \cite{Opr_Quasi}). So on all these dendrites there exist dynamical systems
with zero topological entropy and invariant measures not having discrete spectrum.

Our work below shows that the opposite holds in the case of a dendrite $X$ where $\overline{E(X)}$ is countable: all invariant measures of zero-entropy mappings have discrete spectrum. So in the case of dendrites, the remaining case in Question~\ref{que:main} is when $E(X)$ is countable but $\overline{E(X)}$ is uncountable. This case is left as a problem for further research.

\begin{lem}\label{lem:simpleds}
Let $(X,f)$ be a topological dynamical system and suppose that all measures $\mu\in M_f(X)$ which are concentrated on $A_i$ have discrete spectrum, for each member $A_i$ of some finite or countable collection of invariant Borel sets. Then any $\mu\in M_f(X)$ which is concentrated on $\bigcup_i A_i$ also has discrete spectrum. In particular, if $\Rec(f)\subseteq \bigcup_i A_i$, then every $\mu\in M_f(X)$ has discrete spectrum.
\end{lem}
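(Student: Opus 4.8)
The plan is to write $\mu$ as a countable convex combination $\mu=\sum_i c_i\mu_i$ of $f$-invariant probability measures, each $\mu_i$ concentrated on one of the $A_i$ and hence of discrete spectrum by hypothesis, and then to assemble a complete set of eigenfunctions of the Koopman operator $U_f$ on $L^2_\mu(X)$ out of the eigenfunctions of the $\mu_i$. Since this last step fails for a general convex combination, the point is to arrange the $\mu_i$ to be carried by \emph{pairwise disjoint, $f$-invariant} Borel sets. As a preliminary normalization I would replace each $A_i$ by $\widetilde A_i:=\bigcup_{n\ge 0}f^{-n}(A_i)$; because $f(A_i)\subseteq A_i$ this is an increasing union with $f^{-1}(\widetilde A_i)=\widetilde A_i$, any $\nu\in M_f(X)$ concentrated on $\widetilde A_i$ satisfies $\nu(\widetilde A_i)=\lim_n\nu(f^{-n}(A_i))=\nu(A_i)$ and so is concentrated on $A_i$ (hence has discrete spectrum), and $\bigcup_i\widetilde A_i\supseteq\bigcup_i A_i$ still carries $\mu$. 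So there is no loss in assuming $f^{-1}(A_i)=A_i$ for every $i$.

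Next I would disjointify: put $C_1=A_1$ and $C_i=A_i\setminus\bigcup_{j<i}A_j$. Since $f^{-1}$ commutes with unions and complements, each $C_i$ is a Borel set with $f^{-1}(C_i)=C_i$; the $C_i$ are pairwise disjoint; and $\bigcup_i C_i=\bigcup_i A_i$ has full $\mu$-measure. For every $i$ with $c_i:=\mu(C_i)>0$ define $\mu_i:=c_i^{-1}\mu(\,\cdot\,\cap C_i)$; using $f^{-1}(C_i)=C_i$ and the $f$-invariance of $\mu$ one verifies that $\mu_i\in M_f(X)$, and $\mu_i$ is concentrated on $C_i\subseteq A_i$, so $\mu_i$ has discrete spectrum. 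Clearly $\mu=\sum_i c_i\mu_i$.

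Now I would set $H_i=\{\varphi\in L^2_\mu(X): \varphi=0 \ \mu\text{-a.e. off } C_i\}$. Because the $C_i$ are pairwise disjoint and cover $X$ up to a $\mu$-null set, $L^2_\mu(X)$ is the orthogonal Hilbert-space direct sum of the subspaces $H_i$; because $f^{-1}(C_i)=C_i$, each $H_i$ is $U_f$-invariant; and restriction to $C_i$ identifies $H_i$, up to a positive scalar factor in the norm, with $L^2_{\mu_i}(X)$ in a way that intertwines $U_f$ on $H_i$ with $U_f$ on $L^2_{\mu_i}(X)$. Consequently, if $\psi$ is an eigenfunction of $U_f$ in $L^2_{\mu_i}(X)$, then its extension by $0$ off $C_i$ is an eigenfunction of $U_f$ in $L^2_\mu(X)$ with the same eigenvalue. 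As $\mu_i$ has discrete spectrum, for each $i$ such eigenfunctions span a dense subspace of $H_i$; their closed linear span over all $i$ is therefore all of $L^2_\mu(X)$, which is exactly the statement that $\mu$ has discrete spectrum.

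Finally, the ``in particular'' clause: by the Poincar\'e recurrence theorem every $\mu\in M_f(X)$ satisfies $\mu(\Rec(f))=1$, so if $\Rec(f)\subseteq\bigcup_i A_i$ then every $\mu\in M_f(X)$ is concentrated on $\bigcup_i A_i$ and the first part applies. I expect the only genuinely delicate point to be conceptual: a convex combination of discrete-spectrum measures need not have discrete spectrum, and what makes the argument go through is the decomposition of $L^2_\mu(X)$ into $U_f$-invariant orthogonal blocks coming from the disjoint invariant carriers $C_i$; this is precisely why the seemingly cosmetic passage from $A_i$ to $\widetilde A_i$ — which upgrades $\mu$-a.e. invariance of the $C_i$ to honest invariance — is worth doing first.
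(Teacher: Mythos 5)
Your proof is correct and follows essentially the same route as the paper: pass to fully invariant versions of the $A_i$, disjointify them, decompose $L^2_\mu(X)$ as an orthogonal direct sum over the resulting invariant pieces, extend eigenfunctions by zero, and finish the ``in particular'' clause with Poincar\'e recurrence. Your replacement of $A_i$ by $\bigcup_{n\ge 0}f^{-n}(A_i)$ is a slightly more explicit justification of the paper's ``we may assume $f^{-1}(A_i)=A_i$'' step, but the argument is the same.
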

\begin{proof}
Let $\mu$ be any finite invariant measure concentrated on $\bigcup_i A_i$. Since each $A_i$ is invariant, i.e. $f(A_i)\subset A_i$, and $f$ preserves $\mu$, we may assume by throwing away a set in $X$ of $\mu$-measure zero that $f^{-1}(A_i)=A_i$ for each $i$.

We may take the index set for the variable $i$ to be $\{1,\ldots,n\}$ in the finite case or $\mathbb{N}$ in the countable case. Then putting $B_i = A_i \setminus \bigcup_{j<i} A_j$ for each $i$, we get a collection $\{B_i\}$ of pairwise disjoint invariant Borel sets. Now let $I=\{i~:~\mu(B_i)>0\}$ and write $\mu_i=\mu|_{B_i}$ for the (unnormalized) restriction of $\mu$ to $B_i$. 
Then we get a direct sum decomposition of Hilbert spaces
$L^2_{\mu}(X) = \bigoplus_{i\in I} L^2_{\mu_i}(B_i).$
We may extend each function $\phi\in L^2_{\mu_i}(B_i)$ to an element of $L^2_{\mu_i}(X)$ by letting $\phi$ vanish outside of $B_i$. Since $f^{-1}(B_i)=B_i$, we see that if $\phi\circ f = \lambda \phi$ holds $\mu_i$ almost-everywhere in $B_i$, then by letting $\phi$ vanish outside $B_i$ it continues to hold $\mu$-almost everywhere in $X$. Thus we have the equivalent direct sum decomposition
\begin{equation}\label{dirsum}
L^2_{\mu}(X) = \bigoplus_{i\in I} L^2_{\mu_i}(X),
\end{equation}
and an eigenfunction in a coordinate space is still an eigenfunction in the whole space. For each $i\in I$, the normalized measure $\mu_i/\mu(B_i)$ is an invariant probability measure for $f$ concentrated on $B_i \subset A_i$, so by hypothesis the eigenfunctions of the Koopman operator on the space $L^2_{\mu_i/\mu(B_i)}(X)$ have dense linear span. Dropping the normalizing constant, the same holds for $L^2_{\mu_i}(X)$. Passing through the direct sum decomposition, it follows that the eigenfunctions of the Koopman operator on the space $L^2_{\mu}(X)$ have dense linear span, that is, $\mu$ has discrete spectrum.

The last statement of the lemma follows by the Poincar\'{e} recurrence theorem, whereby if $\Rec(f) \subseteq \bigcup A_i$, then every measure $\mu\in M_f(X)$ is concentrated on $\bigcup A_i$.
\end{proof}

\begin{lem}
Let $X$ be a dendrite and suppose that $f\colon X\to X$ is a continuous map with zero topological entropy. If $D\subset X$ is a tree and $R\colon X\to D$
is a natural retraction, then the map $F\colon D\to D$
given by $F=R\circ f$ has zero topological entropy.
\end{lem}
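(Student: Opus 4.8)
The plan is to argue by contradiction: I assume $h_{top}(F)>0$ and derive an arc horseshoe for $f$, which by \cite{KKM} forces $h_{top}(f)>0$, a contradiction. First observe that $F=R\circ f$ is continuous and maps $D$ into $D=R(X)$, so $(D,F)$ is a continuous tree map, in particular a graph map. For graph maps it is a classical fact (Llibre--Misiurewicz) that positive topological entropy implies the existence, for some iterate $F^n$, of a horseshoe: an arc $J\subseteq D$ together with two subarcs $J_0,J_1\subseteq J$ having pairwise disjoint interiors and satisfying $F^n(J_0)\supseteq J$ and $F^n(J_1)\supseteq J$. The whole proof then reduces to showing that the same arcs work for $f$, i.e. $f^n(J_0)\supseteq J$ and $f^n(J_1)\supseteq J$; since $J_0$ and $J_1$ are nondegenerate arcs in $D\subseteq X$ meeting in at most one point, this is an arc horseshoe for $f$, and we are done.

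The technical input is a lemma describing how $R$ acts on subcontinua. Since $R$ is the standard monotone (first-point) retraction onto the subdendrite $D$, each connected component $U$ of $X\setminus D$ has one-point boundary, say $\{p_U\}\subseteq D$, and $R$ collapses $\overline{U}$ onto $p_U$. From this I would deduce: for every subcontinuum $S\subseteq X$, if $S\cap D\neq\emptyset$ then $R(S)=S\cap D\subseteq S$, whereas if $S\cap D=\emptyset$ then $S$ lies in one component of $X\setminus D$ and $R(S)$ is a single point. The first alternative uses only that a connected subset of a dendrite joining a point $s$ to $D$ must contain the unique entry point $R(s)$ of $D$, so $R(s)\in S$ for every $s\in S$; I would isolate this as a short preliminary claim and regard its verification as routine dendrite theory.

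With the lemma available, the transfer is carried out by induction: I claim $F^j(J_0)\subseteq f^j(J_0)$ for all $0\le j\le n$ (and the same for $J_1$). The mechanism is that $F^j(J_0)$ is never a singleton for $j\le n$ --- otherwise $F^n(J_0)$, being a forward image of that singleton, would be a singleton too, contradicting $F^n(J_0)\supseteq J$. So in the inductive step, writing $S=f(F^j(J_0))$ with $F^{j}(J_0)\subseteq f^j(J_0)$ by hypothesis, the set $F^{j+1}(J_0)=R(S)$ is not a singleton, hence by the lemma $F^{j+1}(J_0)=S\cap D\subseteq S=f(F^j(J_0))\subseteq f^{j+1}(J_0)$. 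Thus $F^n(J_0)\subseteq f^n(J_0)$, and symmetrically $F^n(J_1)\subseteq f^n(J_1)$, which yields $f^n(J_0)\supseteq J\supseteq J_0\cup J_1$ and $f^n(J_1)\supseteq J\supseteq J_0\cup J_1$, completing the contradiction.

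I expect the main obstacle to be exactly this transfer step. The difficulty is that $F=R\circ f$ does not iterate in step with $f$: applying $R$ after each application of $f$ pushes orbits off $D$, so a priori the orbit structures of $(D,F)$ and $(X,f)$ bear no relation, and one cannot hope for a semiconjugacy or a naive inequality between the two entropies. The idea that unlocks the argument is the observation that a \emph{nondegenerate} horseshoe can never have one of its images fall entirely inside a collapsed complementary component of $X\setminus D$; once this is noticed, the retraction lemma delivers the set containment $F^j(J_0)\subseteq f^j(J_0)$ all along the orbit essentially for free.
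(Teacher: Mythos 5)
Your proof is correct and follows essentially the same route as the paper's: obtain an arc horseshoe for $F$ on the tree $D$ (the paper cites \cite{KKM} for this, which covers trees, in place of Llibre--Misiurewicz), observe that no intermediate image of the horseshoe arcs can degenerate to a point, and use the fact that the natural retraction collapses complementary components so that a nondegenerate image satisfies $F^{j}(J_i)\subseteq f^{j}(J_i)$, transferring the horseshoe to $f$. Your write-up merely makes explicit the induction and the retraction lemma that the paper compresses into the single line ``if $F(J)$ is nondegenerate for an arc $J$ then $f(J)\supset F(J)$.''
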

\begin{proof}
Suppose that $F$ has positive entropy. Then by~\cite{KKM} there exists an arc horseshoe $I_1,I_2$
with $F^n(I_1\cap I_2) \supset I_1 \cup I_2$ for some $n\in\mathbb N$. Then $F^i(I_j)$ is not a single point for any $i=1,\ldots,n$ and $j=1,2$.
But if $F(J)$ is nondegenerate for an arc $J$ then $f(J)\supset F(J)$ which implies that 
$f^n(I_1)\cap f^n(I_2)\supset I_1\cup I_2$ which implies that $f$ has positive topological entropy. A contradiction.
\end{proof}

\begin{thm}\label{th:RecMinDiscrete}
Let $X$ be a dendrite such that $\overline{E(X)}$ is countable
and let $f:X\to X$ be a continuous map with zero topological entropy. Then every measure $\mu\in M_f(X)$ has discrete spectrum.
\end{thm}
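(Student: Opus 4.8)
The plan is to derive the theorem from Lemma~\ref{lem:simpleds}: it is enough to produce a countable family of $f$-invariant Borel sets whose union contains $\Rec(f)$ and on each of which every invariant measure has discrete spectrum. By Theorem~\ref{thm:recmin} every recurrent point is minimal, so $\Rec(f)=\Per(f)\cup M_\infty$, where $M_\infty$ denotes the set of points lying in an \emph{infinite} minimal set. The periodic part is immediate: $\Per(f)=\bigcup_{p\ge1}\mathrm{Fix}(f^p)$, each $\mathrm{Fix}(f^p)=\{x:f^p(x)=x\}$ is closed and $f$-invariant, and on it $U_f^p=\mathrm{Id}$, so $U_f$ is diagonalisable with eigenvalues among the $p$-th roots of unity; hence every invariant measure concentrated on $\mathrm{Fix}(f^p)$ has discrete spectrum. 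All the work is therefore in covering $M_\infty$.

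First I would analyse a single infinite minimal set $M=\omega_f(x)$. Proposition~\ref{prop:MinSolSet} and Lemma~\ref{lm:decomp} give a level $k$ and a free arc $A:=f^{i_k}(D_k)$, which is $f^{\alpha_k}$-invariant since $D_k$ has period $\alpha_k$, such that $M':=M\cap A$ is an infinite minimal set for the interval map $g:=f^{\alpha_k}|_A$ (of zero topological entropy), and $M=\bigsqcup_{j=0}^{\alpha_k-1}f^j(M')$ is the decomposition of $M$ into the pieces $M^k_i$ from Proposition~\ref{prop:MinSolSet}\,\eqref{cond:5}. For every $l\ge k$ the subarc $f^{i_k}(D_l)\subseteq A$ is a periodic interval of $g$ of period $\alpha_l/\alpha_k$; since $g$ has zero entropy, every periodic interval of $g$ has period a power of $2$ (\cite{BlockCoppel}, cf.\ \cite{Mis}), so $\alpha_l/\alpha_k$ is a power of $2$ and the odd part of $\alpha_l$ equals a fixed number $d_M\ge1$ for all $l\ge k$. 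Being an infinite minimal interval system, $g|_{M'}$ is uniquely ergodic, hence so is $f|_M$ (the unique measure is determined by its $f^{\alpha_k}$-invariant restriction to $M'$), with unique invariant measure $\mu_M$. By Lemma~\ref{lem:almostinvertible} there is a factor map $\pi\colon(M,f)\to(\Omega,\tau)$ onto the odometer $\Omega=\Omega(\alpha_k)$ whose fibres are singletons off a countable set; since the odometer is uniquely ergodic, $\pi_*\mu_M$ is its (non-atomic) Haar measure, so $\pi$ is injective $\mu_M$-almost everywhere and $(M,\mu_M,f)$ is measure-theoretically isomorphic to $(\Omega,\pi_*\mu_M,\tau)$. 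Thus $\mu_M$ has discrete spectrum and its eigenvalue group is $\widehat{\Omega}=G_{d_M}:=\{\zeta\in\Tor:\zeta^{\,d_M\,2^l}=1\ \text{for some}\ l\ge0\}$, which depends only on the odd number $d_M$.

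Next I would set $A_d:=\{x\in M_\infty:d_{\omega_f(x)}=d\}$ for $d$ ranging over the (countably many) positive odd integers that occur. Each $A_d$ is $f$-invariant, since $\omega_f(\cdot)$, and hence $d_{\omega_f(\cdot)}$, is constant along orbits, and $A_d$ is Borel --- the condition $d_{\omega_f(x)}=d$ can be expressed through countably many conditions on the orbit of $x$, its returns to the free arcs of $X\setminus\overline{B(X)\cup E(X)}$, and the $\omega$-limit sets of the iterates $f^m$. These sets cover $M_\infty$, so by the previous paragraph and Lemma~\ref{lem:simpleds} it only remains to show that every $\mu\in M_f(X)$ concentrated on a fixed $A_d$ has discrete spectrum. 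An ergodic measure concentrated on $M_\infty$ is supported on a single infinite minimal set --- its support lies in the orbit closure of a generic recurrent point, which by Theorem~\ref{thm:recmin} is a minimal set --- hence equals some $\mu_M$; so in the ergodic decomposition $\mu=\int\mu_M\,dP(M)$ every component is some $\mu_M$ with $d_M=d$, of discrete spectrum with eigenvalues in the fixed countable group $G_d$. A direct-integral argument then finishes: writing $L^2_\mu(X)=\int^{\oplus}L^2_{\mu_M}(X)\,dP(M)$ with $U_f$ acting fibrewise, each fibre decomposes as the orthogonal sum over $\lambda\in G_d$ of its $1$-dimensional $\lambda$-eigenspace, and assembling over the countable set $G_d$ gives $L^2_\mu(X)=\bigoplus_{\lambda\in G_d}E_\lambda$ with each $E_\lambda$ consisting of $U_f$-eigenfunctions; hence the eigenfunctions span and $\mu$ has discrete spectrum.

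I expect the genuine obstacle to be precisely this last step --- passing from ``every ergodic measure carried by an infinite minimal set has discrete spectrum'' to the statement for all invariant measures. Discrete spectrum is not preserved under arbitrary ergodic integrals once the eigenvalues vary, so the dendrite structure (Proposition~\ref{prop:MinSolSet} and Lemmas~\ref{lem:almostinvertible} and~\ref{lm:decomp}), together with the classification of zero-entropy interval dynamics, must be used in an essential way to force all the odometers arising from infinite minimal sets of $f$ to share the same dyadic tail, so that only the odd part $d_M$ of the period is free --- and that ranges over a countable set. The remaining points, namely checking carefully that $A_d$ is Borel and carrying out the direct-integral bookkeeping, are routine.
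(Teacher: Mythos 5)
Your overall strategy is viable but genuinely different from the paper's, and the difference matters. The paper never needs to handle \emph{all} infinite minimal sets: it covers $\Rec(f)$ by $\Per(f)$, the maximal invariant subsets $\hat T_n$ of an exhausting sequence of trees $T_n$ with endpoints in $E(X)$, and the sets $\omega_f(z)$ for $z\in \overline{E(X)}$. The point is that a non-periodic recurrent orbit not trapped in any $T_n$ must accumulate on $\overline{E(X)}$ (via \cite[Lemma 2.3]{MaiShi}), so by Theorem~\ref{thm:recmin} its $\omega$-limit set meets the \emph{countable} set $\overline{E(X)}$; hence only countably many infinite minimal sets need individual treatment (each via the a.e.-invertible odometer factor of Lemma~\ref{lem:almostinvertible}), the tree parts are dispatched by the known result of \cite{Opr_Quasi} for graphs, and Lemma~\ref{lem:simpleds} finishes. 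This completely avoids your hardest step: combining uncountably many minimal sets inside one invariant Borel set and pushing discrete spectrum through an ergodic decomposition. Your route instead makes that step the crux, and correctly identifies the only mechanism that can make it work: all ergodic components must share eigenvalues in one fixed countable group. Your spectral bookkeeping for that step is sound in principle (fibrewise eigenprojections $P_\lambda=\lim_N \frac1N\sum_{n<N}\lambda^{-n}U_f^n$ are measurable in the fibre, or one argues via atomicity of spectral measures supported on $G_d$), and it is an interesting alternative; but it buys nothing over the paper's argument and costs two nontrivial claims.

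Those two claims are where your write-up has real gaps. First, the assertion that $\alpha_l/\alpha_k$ is a power of $2$ is load-bearing and not justified by the fact you cite: Proposition~\ref{prop:MinSolSet}(2) allows $f^i(D_l)$ and $f^j(D_l)$ to meet in a point, so the cycle of subarcs of $A$ is only \emph{essentially} disjoint, and the standard ``periodic interval of period $n$ yields a periodic point of period exactly $n$'' argument only gives a point of period \emph{dividing} $n$ (the fixed point of $g^{n}$ may be the common intersection point). You would need to argue instead that the infinite minimal set $M\cap A$ of the zero-entropy interval map $g$ admits no cyclic factor $\Z/m\Z$ with $m$ not a power of $2$ (e.g.\ via the structure theory of infinite $\omega$-limit sets for zero-entropy interval maps in \cite{BlockCoppel}); true, but a genuine argument, not a citation of Sharkovskii. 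Second, the Borel measurability and invariance of $A_d$ is waved at; even the set of minimal points requires an explicit $G_{\delta\sigma\delta}$-type description, and the finer partition by the odd part $d_{\omega_f(x)}$ of the eigenvalue group needs an actual formula (say, in terms of syndetic return times to the countably many free arcs) rather than the sentence you give. Neither gap is fatal, but both must be filled before the proof stands; the paper's tree-plus-$\overline{E(X)}$ covering is precisely the device that makes them unnecessary.
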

\begin{proof}
Let $Z=\{z\in \overline{E(X)}~:~\omega_f(z)\text{ is an infinite minimal set}\}.$ Following arguments in \cite[Theorem 10.27]{Nad}, let $(T_n)_{n \in \mathbb{N}} \subset X$ be an increasing sequence of topological trees with endpoints in $E(X)$ defined as follows. We inductively construct the sequence $(T_n)_{n\in \mathbb{N}}$ starting with $T_1 = \{e_1\}$ for some $e_1 \in E(X)$. Then for $n\geq 1$, we attach to $T_n$ an arc $[e,e_{n+1}]$ whose one endpoint $e_{n+1}$ belongs to $E(X)\setminus T_n$ and $e\in T_n$. Since $E(X)$ is countable we can put every endpoint into one of the trees, that is, we let the sequence $(e_n)_{n \in \mathbb{N}}$ be an enumeration of $E(X)$, and then $\bigcup_{n\geq 1} T_n$ being a connected set must coincide with the whole dendrite $X$.

Let $\hat{T}_n=\bigcap_{i=0}^{\infty} f^{-i}(T_n)$ be the maximal invariant set completely contained in $T_n$. Let $\Per(f)$ be the set of periodic points of $f$. We claim that:
\begin{equation}\label{cover}
    \Rec(f) \subset \Per(f) \cup \left(\bigcup_{z\in Z}\omega_f(z)\right) \cup \left(\bigcup_{n} \hat{T}_n\right).
\end{equation}
To see this, let $x$ be a non-periodic recurrent point whose orbit is not contained in any of the trees $T_n$. This means that there are points $f^{n_i}(x)$ which belong to $T_{m_i}\setminus T_{m_i-1}$ for some strictly increasing sequences $m_i$, $n_i \to \infty$. Then the arcs $[f^{n_i}(x), e_{m_i}]$ in $X$ are pairwise disjoint, so by~\cite[Lemma 2.3]{MaiShi} their diameters tend to zero. This shows that $\liminf_{n\to\infty} d(f^n(x),E(X))=0$. Therefore $\omega_f(x)\cap \overline{E(X)}\neq\emptyset$.
By Theorem~\ref{thm:recmin}, $\omega_f(x)$ is a minimal set, so choosing $z\in\omega_f(x)\cap \overline{E(X)}$ we 
have $\omega_f(x)=\omega_f(z)$. This establishes~\eqref{cover}.

Now observe that any finite invariant measure concentrated on $\Per(f)$ has discrete spectrum, see eg.~\cite[Theorem 2.3]{Opr_Quasi}. As for the sets $\hat{T}_n$,  note that for each $n\in \mathbb{N}$ the map $F=R\circ f$, where $R\colon X\to T_n$ is a retraction, satisfies $F|_{\hat{T}_n}=f|_{\hat{T}_n}$ by the definition 
and therefore each $f$-invariant measure concentrated on $\hat{T}_n$ (a subset of a tree) has discrete spectrum, as, by \cite{Opr_Quasi}, all invariant measures of $F$ have discrete spectrum.

Finally, we claim that any invariant measure concentrated on $\omega_f(z)$, $z\in Z$, has discrete spectrum. Let $(D_k)$ be the periodic subdendrites with periods $(\alpha_k)$ described in Proposition~\ref{prop:MinSolSet} and let $\pi:(\omega_f(z),f)\to(\Omega,\tau)$ be the factor map onto the odometer described in Lemma~\ref{lem:almostinvertible}. Let $\mu\in M_f(X)$ be any invariant measure concentrated on $\omega_f(z)$. Then the pushforward measure $\pi_*(\mu)$ is invariant for the odometer, so by unique ergodicity it is the Haar measure on $\Omega$ and by well-known properties of odometers it has discrete spectrum. Now since $\omega(z)$ contains no periodic points, we know that $\mu$ is non-atomic and therefore countable sets have measure zero. Then in the category of measure preserving transformations, the factor map $\pi:(\omega_f(z),\mu,f)\to(\Omega,\pi_*(\mu),\tau)$ is in fact an isomorphism, since by Lemma~\ref{lem:almostinvertible} it is invertible except on a set of $\mu$-measure zero. This implies that $\mu$ has discrete spectrum.

We have shown that an invariant measure concentrated on any of the countably many invariant sets in~\eqref{cover} has discrete spectrum. By Lemma~\ref{lem:simpleds} this completes the proof.
\end{proof}

\section*{Acknowledgements}

M. Foryś-Krawiec was supported in part by the National Science Centre, Poland (NCN), grant SONATA BIS no. 2019/34/E/ST1/00237: "Topological and Dynamical Properties in Parameterized Families of Non-Hyperbolic Attractors: the inverse limit approach".

S. Roth was supported by Czech Republic RVO funding for I\v{C}47813059.
\begin{table}[h]

\begin{tabular}[t]{b{1.5cm} m{13.5cm}}

\includegraphics [width=.09\textwidth]{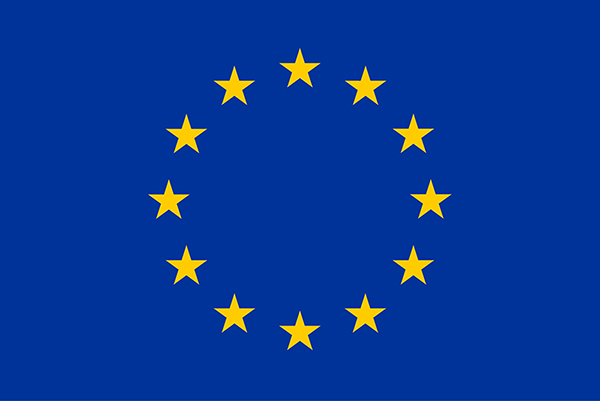} & 
This research is part of a project that has received funding from the European Union's Horizon 2020 research and innovation programme under the Marie Sk\l odowska-Curie grant agreement No 883748.

\end{tabular}
\end{table}

\end{document}